\documentclass[12pt]{amsart}

\usepackage{amsmath}
\usepackage{hyperref}
\usepackage[utf8]{inputenc} 
\usepackage[ruled,noline,noend]{algorithm2e}
\usepackage{mathtools}
\usepackage{enumerate}

\newtheorem{theorem}{Theorem}[section]

\newtheorem{lemma}[theorem]{Lemma}

\newtheorem{conjecture}[theorem]{Conjecture}

\newcommand{\bF}{\mathbb F}

        \DeclareMathOperator{\bw}{bw}

        \DeclareMathOperator{\rank}{rank}
        \DeclareMathOperator{\iso}{ISO}

        \newcommand{\del}{\backslash}
        \newcommand{\con}{/}

\begin{document}
\sloppy
	
\title[Matroid fragility]{Matroid fragility and relaxations of
circuit hyperplanes}
	
\author{Jim Geelen}
\author{Florian Hoersch}
\address{Department of Combinatorics and Optimization, University of Waterloo, Waterloo, Canada} 
\thanks{This research was partially supported by grants from the
Office of Naval Research [N00014-10-1-0851] and NSERC [203110-2011].}
	
\subjclass{05B35}
\keywords{Rota's Conjecture, matroid fragility, circuit-hyperplane relaxation}
\date{\today}
	
\begin{abstract}
We relate two conjectures that play a central role in the reported proof of Rota's Conjecture.
Let $\bF$ be a finite field. The first conjecture states that: the branch-width of any $\bF$-representable 
$N$-fragile matroid is bounded by a function depending only upon $\bF$ and $N$.
The second conjecture states that: if a matroid $M_2$ is obtained from a matroid $M_1$ by relaxing
a circuit-hyperplane and both $M_1$ and $M_2$ are $\bF$-representable, then the branch-width of
$M_1$ is bounded by a function depending only upon $\bF$. Our main result is that the second conjecture 
implies the first.
\end{abstract}
\maketitle
	
\section{Introduction}
	
The purpose of this paper is to relate two concepts,
$N$-fragile matroids and circuit-hyperplane relaxations,
which both play a central role in the reported proof
of Rota's Conjecture~[\ref{ggw}].
 
A matroid $M$ is {\em $N$-fragile}
if $N$ is a minor of $M$, but there is no element $e\in E(M)$
such that $N$ is a minor of both $M\del e$ and $M\con e$ or, equivalently, there is
a unique partition $(C,D)$ of $E(M)-E(N)$ such that $N=M\con C\del D$.
Note that here we want $N$, itself, as a minor, not just an
isomorphic copy of $N$.
		
For a finite field $\bF$ of order $q$, we let $\bF^k$ denote
an extension field of $\bF$ of order $q^k$. We prove the
following result.
\begin{theorem}\label{main}
Let $\bF$ be a finite field, let $N$ be a matroid with $k$ elements,
let $B$ be a basis of $N$, and
let $M$ be an $\bF$-representable $N$-fragile matroid.
Then there exist
$\bF^{2k^2}$-representable matroids $M_1$ and $M_2$ on the same
ground set and elements $c,d\in E(M_1)$ such that
$M_2$ is obtained from $M_1$ by relaxing a circuit-hyperplane
and $M\con B\del (E(N)-B) = M_1\con c\del d$.
\end{theorem}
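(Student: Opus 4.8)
The plan is to use the unique partition $(C,D)$ of $E(M)-E(N)$ with $N=M\con C\del D$, to fix an $\bF$-representation of $M$, and then to build $M_1$ by \emph{compressing the minor $N$ into two new elements} $c$ and $d$, where $c$ carries the role of the contracted basis $B$ and $d$ the role of the deleted cobasis $E(N)-B$.

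First I would collect the structural facts. Since $B$ is a basis of $N$, one checks readily that $B$ is independent in $M$ with $\rk_M(B)=\rk(N)=:\rho$, that $E(N)-B\subseteq\cl_M(B\cup C)$, and consequently that $M_0:=M\con B\del(E(N)-B)$ has rank $\rk(M)-\rho$ and is $\bF$-representable, being a minor of $M$. It is convenient to fix some $b_1\in B$ (assume $\rho\ge 1$; the rank-zero case needs only a small variation) and to pass to $M_1^-:=M\con(B-\{b_1\})\del(E(N)-B)$, a minor of $M$ on ground set $(C\cup D)\cup\{b_1\}$ of rank $\rk(M)-\rho+1$, for which $M_1^-\con b_1=M_0$. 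Setting $c:=b_1$, I would represent $M_1^-$ over $\bF$ so that the column of $c$ is a standard basis vector, and then (i) modify the representation \emph{only in the coordinate direction spanned by $c$} --- adding scalar multiples of that column to the others --- which leaves $M_1^-\con c$ unchanged but can alter $M_1^-$ itself, and (ii) adjoin one further element $d$; all new scalars are to be taken from a suitable extension field $\mathbb K$ of $\bF$. Writing $M_1$ for the resulting matroid, we still have $M_1\del d=(\text{the modified }M_1^-)$ and hence $M_1\con c\del d=M_0$, since the modification in the $c$-direction is killed by $\con c$ and $d$ is killed by $\del d$; this identity is then a direct rank computation.

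The heart of the construction is to arrange (i) and (ii) so that a prescribed set $X$ becomes simultaneously a circuit and a hyperplane of $M_1$. The modifications in the $c$-direction are designed so as to turn a chosen rank-maximal independent subset of $(C\cup D)\cup\{c\}$ into a circuit, the datum governing this being essentially a representation of $N$ relative to $B$; the adjoined element $d$ is then used to correct for the part of $E(N)$ that was lost, making this circuit into a flat, and $d$ is placed off its closure, so that $X$ is a hyperplane. Once $X$ is a circuit-hyperplane of $M_1$, the relaxation $M_2$ is obtained by declaring $X$ a basis, equivalently by perturbing a single column so that the columns of $X$ become independent while no other circuit of $M_1$ is destroyed and no further element enters $\cl(X)$; because the part of the representation inherited from $M_1^-$ uses only scalars of $\bF$, a generic perturbation over $\mathbb K$ avoids the $\bF$-rational obstructions, so $M_2$ is representable over $\mathbb K$ as well.

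The main obstacle is the quantitative one: showing the extension degree can be taken to be $2k^2$, a bound depending on $N$ alone rather than on $|E(M)|$ or $\rk(M)$. What makes this possible is that $M_1$ and $M_2$ differ from the $\bF$-representable matroid $M_1^-$ only inside a region governed by the $k$-element matroid $N$: the new and modified scalars must satisfy a system of non-vanishing conditions on subdeterminants, and although there may be many such subdeterminants (coming from bases meeting the region), each is a polynomial whose non-constant part involves only the boundedly many new variables, with coefficients in $\bF$. A careful accounting --- bounding the number of these conditions and their degrees in terms of $k$ --- should show that a single generator of $\bF^{2k^2}$ over $\bF$ can be chosen to satisfy all of them at once, which realizes $M_1$ over $\bF^{2k^2}$ and leaves room inside $\bF^{2k^2}$ for the further generic perturbation producing $M_2$. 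It is precisely here that $N$-fragility enters: the uniqueness of $(C,D)$ is what makes the embedding of $N$ into $M$ rigid, so that the entire modification can be confined to a region whose size is bounded in terms of $k$.
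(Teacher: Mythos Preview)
Your proposal has a genuine gap at exactly the point you flag as ``the main obstacle,'' the field-degree bound.  You assert that a ``careful accounting'' of the determinant conditions will show that a generator of $\bF^{2k^2}$ suffices, but you do not carry this out, and the sketch you give does not obviously work.  The conditions to be satisfied are non-vanishings of subdeterminants of the modified matrix, one for every basis of $M_1$ (and of $M_2$) meeting the perturbed region; the number of such bases grows with $|E(M)|$, not with $k$.  Even granting that each condition is a polynomial of bounded degree in the new scalars with coefficients in $\bF$, a Schwartz--Zippel or union bound on the bad locus gives a field-size requirement depending on the \emph{number} of conditions, hence on $|E(M)|$.  You try to rescue this by noting that the coefficients lie in $\bF$ so that there are ``few'' genuinely distinct conditions, but you neither make this precise nor reconcile it with your own construction: ``adding scalar multiples of that column to the others'' introduces one new scalar per modified column, and you never say which columns are modified, so the number of new variables is itself not visibly bounded in terms of~$k$.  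Separately, the construction of the circuit-hyperplane is left entirely unspecified: you say the $c$-direction modifications are ``designed'' to turn a maximal independent set into a circuit and that $d$ is adjoined to make it a flat, but you give no mechanism for either step and no place where $N$-fragility is actually invoked to guarantee it can be done.

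The paper's route is quite different and avoids both problems by structural rather than counting arguments.  First (Lemma~\ref{matrix1}) one zeroes out the $E(N)$-block of a standard representation of $M$; $N$-fragility is used here, and only here, to show the resulting matrix is $E(N)$-fragile, whence the associated matroid is $\iso(B,E(N))$-fragile over $\bF$ itself.  Next (Lemmas~\ref{iso:easy} and~\ref{extensions}) one compresses $E(N)$ to $\{c,d\}$ by adding $d$ freely to the flat spanned by $E(N)-B$ and, dually, $c$; the free extension is represented by the column $\sum_{v\in X}\alpha_v A_v$ with $(\alpha_v)$ an $\bF$-basis of $\bF^{|X|}$, so every relevant determinant condition reduces to an $\bF$-linear relation among the $\alpha_v$, which is impossible by construction --- no counting is needed, and the extension degree is exactly $|X|$.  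Finally (Lemma~\ref{matrix3}), in the resulting $\{c,d\}$-fragile matrix the set $(S_1\setminus\{c\})\cup\{d\}$ is \emph{automatically} a circuit-hyperplane, and replacing the single $(c,d)$-entry by any element of $\bF'\setminus\bF$ with $[\bF':\bF]=2$ realises the relaxation, since each relevant determinant is an $\bF$-affine function of that one entry and hence has its only root (if any) in $\bF$.  The total extension degree is at most $|E(N)-B|\cdot |B|\cdot 2\le 2k^2$.
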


The proof of Rota's Conjecture relies on the reported proofs of the 
following two conjectures by Geelen, Gerards, and Whittle.
\begin{conjecture}\label{conj1}
Let $\bF$ be a finite field and let $N$ be a matroid.
Then the branch-width of any $\bF$-representable $N$-fragile matroid
is bounded by a constant depending only upon $|\bF|$ and $|N|$. 
\end{conjecture}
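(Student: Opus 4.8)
The plan is to deduce Conjecture~\ref{conj1} from Theorem~\ref{main} together with the second conjecture stated in the abstract (that if $M_2$ is a circuit-hyperplane relaxation of $M_1$ and both $M_1$ and $M_2$ are representable over a common finite field, then the branch-width of $M_1$ is bounded by a function of that field alone). Let $\bF$ be a finite field, let $N$ be a matroid, put $k=|N|$, and let $M$ be an $\bF$-representable $N$-fragile matroid. Fix a basis $B$ of $N$ and set $M_0=M\con B\del(E(N)-B)$; this is a minor of $M$ obtained by deleting or contracting exactly the $|E(N)|=k$ elements of $N$. By Theorem~\ref{main} there are $\bF^{2k^2}$-representable matroids $M_1,M_2$ on a common ground set and elements $c,d\in E(M_1)$ with $M_2$ a circuit-hyperplane relaxation of $M_1$ and $M_0=M_1\con c\del d$.

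Now I would argue in three short steps. First, apply the second conjecture to the pair $(M_1,M_2)$ over the field $\bF^{2k^2}$: both matroids are $\bF^{2k^2}$-representable and $M_2$ is a circuit-hyperplane relaxation of $M_1$, so $\bw(M_1)\le f(|\bF^{2k^2}|)$ for some function $f$; since $|\bF^{2k^2}|=|\bF|^{2k^2}$ depends only on $|\bF|$ and $k=|N|$, this yields $\bw(M_1)\le g(|\bF|,|N|)$ for a suitable $g$. Second, branch-width is minor-monotone, so $\bw(M_0)=\bw(M_1\con c\del d)\le\bw(M_1)\le g(|\bF|,|N|)$. Third, $M$ is obtained from $M_0$ by adding back the $k$ elements of $E(N)$ one at a time --- each element of $E(N)-B$ as a single-element extension and each element of $B$ as a single-element coextension --- and since each such operation increases branch-width by at most $1$ (a standard fact), $\bw(M)\le\bw(M_0)+k\le g(|\bF|,|N|)+|N|$. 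This is exactly a bound of the shape asserted by Conjecture~\ref{conj1}, so the second conjecture implies the first.

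I do not expect a new combinatorial obstacle here: the substance has been isolated into Theorem~\ref{main}, and what remains is essentially bookkeeping. The one point needing genuine attention is in the first step --- the second conjecture is invoked over the extension field $\bF^{2k^2}$ rather than over $\bF$, and this is legitimate precisely because the order of that extension field is itself a function of $|\bF|$ and $|N|$, so the branch-width bound it produces still has the form required by Conjecture~\ref{conj1}. The remaining care is purely in the branch-width estimates: using minor-monotonicity to pass downward from $M_1$ to $M_0$, and the ``at most $+1$ per added element'' inequality to pass upward from $M_0$ back to $M$, so that the $k$-element gap between $M$ and the reduced minor $M_0$ costs only an additive $|N|$ in the end.
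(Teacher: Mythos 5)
Your argument is correct and is precisely the derivation the paper intends when it asserts (without writing out the details) that Theorem~\ref{main} shows Conjecture~\ref{conj2} implies Conjecture~\ref{conj1}: apply Conjecture~\ref{conj2} over $\bF^{2k^2}$, use minor-monotonicity of branch-width to pass from $M_1$ down to $M_1\con c\del d = M\con B\del(E(N)-B)$, and then use the paper's stated inequality $\bw(M)-(|E(M)|-|E(N')|)\le\bw(N')$ to pass back up from this minor to $M$ at an additive cost of $|E(N)|$. You have also correctly identified the one subtle point, namely that invoking Conjecture~\ref{conj2} over the extension field is harmless because $|\bF^{2k^2}|$ is itself a function of $|\bF|$ and $|N|$.
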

For the definition of branch-width see Oxley~[\ref{oxley}].
For this paper it suffices to know that branch-width is a parameter associated with a matroid $M$, which we denote here by $\bw(M)$, and 
that for any minor $N$ of $M$ we have
$$ \bw(M) - (|E(M)|-|E(N)|)\le \bw(N) \le \bw(M).$$
\begin{conjecture}\label{conj2}
Let $H$ be a circuit-hyperplane in a matroid $M_1$ 
and let $M_2$ be the matroid obtained by relaxing $H$.
If $M_1$ and $M_2$ are both representable over a finite field $\bF$,
then the branch-width of $M_1$ is bounded by a constant
depending only upon $|\bF|$.
\end{conjecture}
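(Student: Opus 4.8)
The plan is to construct $M_1$ and $M_2$ by hand from an $\bF$-representation of $M$, ``collapsing'' the $k$-element minor $N$ onto the two new elements $c,d$ together with a single circuit-hyperplane relaxation, the price being the passage to $\bF^{2k^2}$. \emph{Step 1 (normalising).} By fragility there is a unique partition $(C,D)$ of $E(M)-E(N)$ with $N=M\con C\del D$, and, $B$ being a basis of $N$, it is independent in $N$ and hence in $M$. Fix an $\bF$-representation of $M$, pivot so that $B$ sits in standard coordinates, and extend $B$ to a basis of $M$ by adjoining elements of $C$ and then of $D$. In these coordinates the way $N$ sits inside $M$ is recorded by an array of at most $k^2$ entries of $\bF$ (the fundamental-circuit coefficients of the elements of $E(N)-B$, and their dependence on $C$). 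Deleting from this matrix the rows indexed by $B$ and the columns indexed by $E(N)$ leaves a matrix $A_0$ over $\bF$ whose matroid is exactly $M_0:=M\con B\del(E(N)-B)$, with ground set $E(M)-E(N)=C\sqcup D$.

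\emph{Step 2 (building $M_1$).} On ground set $(C\sqcup D)\cup\{c,d\}$, let $M_1$ be the matroid of a matrix over $\bF^{2k^2}$ with $A_0$ in a lower block, one further coordinate, the column of $c$ equal to the new unit vector, and the entries of the new coordinate on the columns of $C\sqcup D$ and the whole column of $d$ still to be specified. The block shape forces $M_1\con c\del d=M_0$ for every completion of the matrix, since contracting $c$ kills the new coordinate and deleting $d$ kills its column. I would spend the remaining freedom, plus a bounded number of generic choices, to single out a set $H\subseteq(C\sqcup D)\cup\{c,d\}$ with $|H|=r(M_1)=r(M_0)+1$ that is simultaneously a circuit and a hyperplane of $M_1$, with its fundamental-circuit data arranged to reproduce the array from Step~1. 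Each generic choice is the avoidance of finitely many proper subspaces, so all of them together succeed provided a bounded family of polynomials, of degree bounded in terms of $k$, is non-zero; an extension of $\bF$ of bounded degree then certainly suffices.

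\emph{Step 3 (relaxing).} Let $M_2$ be obtained from $M_1$ by relaxing $H$. With respect to a basis of $M_1$ meeting $H$ in all of $H$ except one element $h_0$, the matrices for $M_1$ and $M_2$ differ only in the column of $h_0$: in $M_1$ this column is supported on $H$ with a zero in the single off-$H$ basis coordinate (its fundamental circuit being precisely $H$), and relaxing $H$ replaces it by a column with non-zero off-$H$ coordinate whose other fundamental circuits are unchanged. Because the entries chosen in Step~2 are generic over $\bF^{2k^2}$, such a replacement column can be found over $\bF^{2k^2}$ and represents exactly $M_2$; it affects neither the status of $H$ as a hyperplane of the relevant deletions nor the identity $M_1\con c\del d=M_0$, since that minor operation erases the column of $h_0$. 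Hence $M_1$ and $M_2$ are both $\bF^{2k^2}$-representable, $M_2$ is obtained from $M_1$ by relaxing the circuit-hyperplane $H$, and $M_1\con c\del d=M\con B\del(E(N)-B)$, as required.

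\emph{Main obstacle.} The delicate part is reconciling Steps~2 and 3: a circuit-hyperplane relaxation of a representable matroid generically fails to be representable over any field, so the free entries in Step~2 cannot be chosen merely generically — they must be compatible with the one-column perturbation of Step~3 while still making $H$ both a circuit and a hyperplane, and these demands must be met by a single assignment. Establishing that such an assignment exists, and that the attendant polynomial conditions have low enough degree that the extension of degree exactly $2k^2$ (rather than a larger one) contains a solution, is the technical core of the argument; the verification of $M_1\con c\del d=M_0$ and the rank bookkeeping are then routine.
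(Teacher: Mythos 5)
You were handed Conjecture~\ref{conj2} as the target, but the paper does not prove it: both Conjectures~\ref{conj1} and~\ref{conj2} are taken as reported-but-unpublished inputs of Geelen, Gerards and Whittle, and the paper's actual contribution is Theorem~\ref{main}, that Conjecture~\ref{conj2} implies Conjecture~\ref{conj1}. Your proposal is plainly an attempt at Theorem~\ref{main} (its stated conclusion is exactly that theorem's conclusion), so I evaluate it on those terms.

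There is a genuine gap, and you have located it yourself. Your Steps~2--3 construct $M_1$ from $M_0=M\con B\del(E(N)-B)$ by appending two columns with generic entries, and then hope to single out a set $H$ that is simultaneously a circuit-hyperplane of $M_1$ and has an $\bF^{2k^2}$-representable relaxation. As you concede, circuit-hyperplane relaxation generically destroys representability, so ``generic'' cannot do the work, and you give no argument that one assignment satisfies all the constraints at once. The structural reason this cannot be patched by cleverer genericity is that $M_0$ has already forgotten that $M$ was $N$-fragile, and fragility is precisely what produces the circuit-hyperplane and the representable relaxation. You try to re-encode the data of $N$-in-$M$ via ``the array from Step~1,'' but nothing forces the resulting $M_1$ to be fragile, and without fragility the key determinant step below is unavailable.

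The paper's route is quite different: instead of building $M_1$ from $M_0$, it transforms $M$ itself through fragility-preserving reductions. Theorem~\ref{red1} (via Lemmas~\ref{matrix1} and~\ref{converse}) zeroes out the $E(N)$-block of a standard representation of $M$ to obtain an $\iso(B,E(N))$-fragile matroid on the same ground set with $M'\con B = M\con B$. Theorem~\ref{red2} (via Lemmas~\ref{iso:easy} and~\ref{extensions}) collapses the isolated minor to $\iso(\{c\},\{c,d\})$ by freely extending into the flat spanned by the loops and, dually, into the coflat spanned by the coloops; this is where the $\bF^{k^2}$ extension is paid and where fragility is preserved rather than reconstructed. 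Theorem~\ref{red3} (Lemma~\ref{matrix3}) then delivers the punchline: in a $\{c,d\}$-fragile standard representation $A_1$ the $(c,d)$-entry must be $0$, the set $(S_1-\{c\})\cup\{d\}$ is forced to be a circuit-hyperplane, and replacing that single entry by any element of $\bF^2\setminus\bF$ realizes the relaxation, because the determinant of any submatrix through $c$ and $d$ is an affine-linear polynomial in that entry with $\bF$-coefficients and nonzero constant term, so any root it has lies in $\bF$; the rank bookkeeping that follows relies on the fragility inequality $\rank(A[\{c,d\}\cup Y])>\rank(A[Y])$. That determinant-plus-fragility mechanism is exactly the ``technical core'' your proposal leaves unbuilt, and it cannot be recovered once you have passed to $M_0$.
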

Theorem~\ref{main} shows that Conjecture~\ref{conj2} implies
Conjecture~\ref{conj1}.

Our proof of Theorem~\ref{main} is via  a sequence of results on matrices,
but those results have interesting consequences for matroids, which
we state below.

We call a matroid {\em isolated} if each of its components 
has only one element.  Thus an isolated matroid
consists only of loops and coloops; the set
of coloops is the unique basis. The isolated 
matroid on ground set $E$ with basis $B$ is denoted
$\iso(B,E)$. For integers $r$ and $n$ with $0\le r\le n$
we denote $\iso(\{1,\ldots,r\},\{1,\ldots,n\})$ by $\iso(r,n)$.

The following result shows that, in order to
prove Theorem~\ref{main}, it suffices to
consider the case that $N$ is an isolated matroid.
\begin{theorem}\label{red1}
Let $\bF$ be a finite field, 
let $B$ be a basis of a matroid $N$,
and let $M$ be an $\bF$-representable $N$-fragile matroid.
Then there exists an $\bF$-representable $\iso(B,E(N))$-fragile
matroid $M'$ such that $E(M')=E(M)$ and $M'\con B=M\con B$.
\end{theorem}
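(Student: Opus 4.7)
I plan to prove Theorem \ref{red1} by induction on the number $k$ of elements of $E(N) - B$ that are non-loops in $N$. In the base case $k = 0$, every element of $E(N) - B$ is a loop in $N$, which forces each $b \in B$ to be a coloop; thus $N = \iso(B, E(N))$ and one may take $M' = M$.

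For the inductive step, pick a non-loop $p \in E(N) - B$ of $N$ and let $(C, D)$ be the unique partition of $E(M) - E(N)$ with $M \con C \del D = N$. I would fix an $\bF$-representation $A$ of $M$ in standard form in which the columns indexed by the basis $B \cup C$ of $M$ form the identity matrix, splitting the row indices as $R_B \cup R_C$, and write the column of $p$ as $(v_p|_{R_B}, v_p|_{R_C}) \in \bF^{R_B} \oplus \bF^{R_C}$. Construct $A^{(1)}$ from $A$ by replacing the column of $p$ with $(0, v_p|_{R_C})$, and let $M^{(1)}$ be the matroid represented by $A^{(1)}$. Let $N^{(1)}$ be the matroid on $E(N)$ obtained from $N$ by declaring $p$ a loop. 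Since $A$ and $A^{(1)}$ differ only in the column of $p$, the matroids $M$ and $M^{(1)}$ agree on $E(M) - p$, and inspecting the representations gives $M^{(1)} \con C \del D = N^{(1)}$ and $M^{(1)} \con B = M \con B$.

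The main step, which I expect to be the hardest, is to verify that $M^{(1)}$ is $N^{(1)}$-fragile. Suppose $(C_1, D_1)$ partitions $E(M) - E(N)$ with $M^{(1)} \con C_1 \del D_1 = N^{(1)}$; after replacing $C_1$ with a basis of $C_1$ in $M^{(1)}$ and absorbing the excess into $D_1$, we may assume $C_1$ is independent. Because $B$ is a basis of $N^{(1)}$ and $M, M^{(1)}$ agree on $E(M) - p \supseteq B \cup C_1$, the set $B \cup C_1$ is independent in both $M$ and $M^{(1)}$. The loop condition on $p$ in $N^{(1)}$ forces $(0, v_p|_{R_C}) \in \operatorname{span}(C_1)$ in $A^{(1)}$, equivalently in $A$; combining with $v_p = (v_p|_{R_B}, 0) + (0, v_p|_{R_C})$ gives the key congruence $v_p \equiv (v_p|_{R_B}, 0) \pmod{\operatorname{span}(C_1)}$ in $A$. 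Since $(v_p|_{R_B}, 0)$ is the linear combination of the $B$-columns with coefficients $v_p|_{R_B}$, the image of $v_p$ in $\bF^r/\operatorname{span}(C_1)$ lies in the span of the images of the $B$-columns, with coordinates $v_p|_{R_B}$ in that basis. For every other $p' \in E(N) - B - p$, the equality of the $p'$-columns in $A$ and $A^{(1)}$ together with $M^{(1)} \con C_1 \del D_1 = N^{(1)}$ forces the image of $v_{p'}$ to likewise lie in the $B$-span with coordinates $v_{p'}|_{R_B}$. Consequently every element of $E(N) - B$ lies in $\cl_{M \con C_1}(B)$ with exactly its $N$-coordinates, so $M \con C_1 \del D_1 = N$. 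The $N$-fragility of $M$ then yields $(C_1, D_1) = (C, D)$, establishing the $N^{(1)}$-fragility of $M^{(1)}$.

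Finally, since $N^{(1)}$ has $k - 1$ non-loops in $E(N^{(1)}) - B$, the inductive hypothesis applied to $M^{(1)}$ and $N^{(1)}$ produces an $\bF$-representable $\iso(B, E(N^{(1)}))$-fragile matroid $M'$ with $E(M') = E(M)$ and $M' \con B = M^{(1)} \con B = M \con B$; since $\iso(B, E(N^{(1)})) = \iso(B, E(N))$, this $M'$ satisfies the conclusion of Theorem \ref{red1}.
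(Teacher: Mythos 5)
Your proposal takes a genuinely different route from the paper (a column-by-column induction, zeroing out one entry of $A[E(N)]$ at a time, versus the paper's one-shot argument via Lemmas~\ref{matrix1} and~\ref{converse}), but it has a real gap in the verification that $M^{(1)}$ is $N^{(1)}$-fragile. The step that fails is the sentence asserting that, for $p' \in E(N)-B-\{p\}$, ``the equality of the $p'$-columns in $A$ and $A^{(1)}$ together with $M^{(1)}\con C_1\del D_1 = N^{(1)}$ forces the image of $v_{p'}$ to likewise lie in the $B$-span with coordinates $v_{p'}|_{R_B}$.'' The matroid identity $M^{(1)}\con C_1\del D_1 = N^{(1)}$ only tells you that the quotient images of the columns indexed by $E(N)-B$ give \emph{some} standard representation of $N^{(1)}$ with respect to $B$; it does not pin down the coordinates, since standard representations with respect to a fixed basis are not unique. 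Equality of $A$ and $A^{(1)}$ on the $p'$ column is irrelevant here, because it says nothing about what happens after reducing modulo $\operatorname{span}(C_1)$.

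Concretely, the implicit linear-algebra claim you are relying on --- if $[I,A_1]$ and $[I,A_2]$ are standard representations in $\bF^{B\times(X-B)}$ with $A_1[p]=A_2[p]$ and $M([I,A_1])\del p = M([I,A_2])\del p$, then $M([I,A_1])=M([I,A_2])$ --- is false. Over $\GF(3)$ take $B=\{b_1,b_2,b_3\}$, $X-B=\{p,q\}$,
\[
A_1=\begin{pmatrix}1&1\\1&1\\1&0\end{pmatrix},\qquad
A_2=\begin{pmatrix}1&1\\1&2\\1&0\end{pmatrix}.
\]
Both deletions of $p$ give the same matroid (the only circuit through $q$ is $\{q,b_1,b_2\}$), and the $p$-columns agree, yet $\{p,q,b_3\}$ is a circuit of $M([I,A_1])$ but independent in $M([I,A_2])$. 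So step~7 of your chain of deductions does not follow, the conclusion $M\con C_1\del D_1 = N$ is unjustified, and the $N^{(1)}$-fragility of $M^{(1)}$ is left unproved. (There is also a smaller omission: you only normalize $C_1$ to be independent; you must also arrange $D_1$ to be coindependent in $M^{(1)}\con C_1$ so that $B\cup C_1$ is actually a \emph{basis} of $M$ before you can form the quotient representation with respect to $B$.) The paper sidesteps all of this by proving $E(N)$-fragility of the fully zeroed-out matrix in one step (Lemma~\ref{matrix1}), using the auxiliary matrix $[A,A_0]$ and a rank argument, and then applying Lemma~\ref{converse}; no per-element induction or coordinate-matching is needed.
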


The following result shows that, in order to
prove Theorem~\ref{main}, it suffices to
consider the case that $N=\iso(1,2)$.
\begin{theorem}\label{red2}
Let $\bF$ be a finite field, let $X_1$ and $X_2$ be disjoint finite sets with $|X_1\cup X_2|=k$, 
let $M$ be an $\bF$-representable $\iso(X_1,X_1\cup X_2)$-fragile matroid,
and let $c$ and $d$ be distinct elements not in $M$.
Then there exists an $\bF^{k^2}$-representable $\iso(\{c\},\{c,d\})$-fragile
matroid $M'$ such that $E(M')=E(M)-(X_1\cup X_2)\cup\{c,d\}$ and
$M'\con c\del d=M\con X_1\del X_2$.
\end{theorem}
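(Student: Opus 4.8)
The plan is as follows. First fix a representation of $M$ over $\bF$. Because $M$ is $\iso(X_1,X_1\cup X_2)$-fragile, it has a unique partition $(C,D)$ of $Z:=E(M)\setminus(X_1\cup X_2)$ with $M\con C\del D=\iso(X_1,X_1\cup X_2)$, and unpacking the isolated-minor condition shows this to be equivalent to: $C$ is independent in $M$ and is the \emph{unique} $U\subseteq Z$ with $X_2\subseteq\cl_M(U)$ and $X_1$ independent in $M\con U$. Choosing a basis of $M$ that contains $C\cup X_1$ (possible since $C$ is independent and $X_1$ is independent in $M\con C$) puts the representation into a normal form in which the columns of $C$ and the columns of $X_1$ are distinct standard unit vectors, supported on disjoint coordinate sets $R_C$ and $R_1$; the columns of $X_2$ are supported on $R_C$ (as $X_2\subseteq\cl_M(C)$); and $M_0:=M\con X_1\del X_2$ is represented, on the remaining coordinates $R_C\cup R_2$, by the matrix obtained by deleting the columns of $X_1$ and the rows in $R_1$. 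Write $A_1$ for the $R_1\times D$ block of the normal form, the part that the contraction absorbs.

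Next I would record a reformulation of the property we want $M'$ to have. For any matroid $M'$ on $Z\cup\{c,d\}$ and any partition $(S,T)$ of $Z$, one has $M'\con S\del T=\iso(\{c\},\{c,d\})$ exactly when $d\in\cl_{M'}(S)$ and $c\notin\cl_{M'}(S)$; so by the ``unique partition'' description of fragility, $M'$ is $\iso(\{c\},\{c,d\})$-fragile if and only if exactly one set $U\subseteq Z$ (henceforth, a \emph{good} set) satisfies $d\in\cl_{M'}(U)$ and $c\notin\cl_{M'}(U)$. It therefore suffices to construct an $\bF^{k^2}$-representable matroid $M'$ with $M'\con c\del d=M_0$ whose unique good set is $C$, and I would build it in two nested stages over $\bF\subseteq\bF^k\subseteq\bF^{k^2}$. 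Stage $1$ merges $X_1$ into $c$: fix a generator $\theta$ of $\bF^k$ over $\bF$, adjoin one new coordinate $\rho$, delete the coordinates in $R_1$, let the column of $c$ be the unit vector $e_\rho$, keep the columns of $C$ and $X_2$ (now zero on $\rho$), and give each column $j\in D$ the new entry $v_j=\sum_i\theta^{i-1}(A_1)_{ij}$ on $\rho$. Stage $2$ merges $X_2$ into $d$: fix a generator $\psi$ of $\bF^{k^2}$ over $\bF^k$ and replace the whole block of columns indexed by $X_2$ by the single column $d=\sum_l\psi^{l-1}w_l$, where $w_l$ is the $l$-th such column; call the result $M'$. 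Contracting $c=e_\rho$ just deletes the coordinate $\rho$, and deleting $d$ then returns the normal form of $M_0$, so $M'\con c\del d=M_0=M\con X_1\del X_2$ and $E(M')=(E(M)\setminus(X_1\cup X_2))\cup\{c,d\}$.

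Identifying the good sets of $M'$ is the heart of the argument and the place where the extension field earns its keep. I would prove, for $U\subseteq Z$: (a) $c\notin\cl_{M'}(U)$ if and only if $X_1$ is independent in $M\con U$; and (b) when $X_1$ is independent in $M\con U$, $d\in\cl_{M'}(U)$ if and only if $X_2\subseteq\cl_M(U)$. Given these, any good set $U$ satisfies both ``$X_1$ independent in $M\con U$'' and ``$X_2\subseteq\cl_M(U)$'', so it equals $C$ by the fragility of $M$; and $C$ is good because $X_2\subseteq\cl_M(C)$ and $X_1$ is independent in $M\con C$. For (a): $e_\rho\in\cl_{M'}(U)$ means some linear combination of the columns of $U$, say with coefficient vector $\mu$, equals $e_\rho$; the coordinates off $\rho$ force the $(U\cap D)$-part of $\mu$ into the ($\bF$-defined) subspace $\mathcal K_U$ of dependencies of $U\cap D$ over $U\cap C$ in $M_0$, while the $\rho$-coordinate reads $\sum_i\theta^{i-1}\nu_i=1$ with $\nu_i=\sum_j\mu_j(A_1)_{ij}$; expanding $\mu$ over the tower $\bF\subseteq\bF^k\subseteq\bF^{k^2}$ and using that $\mathcal K_U$ is $\bF$-defined, this system is solvable exactly when $\mathcal K_U$ contains an $\bF$-valued vector on which the tuple $(\nu_i)_i$ is nonzero, which unwinds to $X_1$ being dependent in $M\con U$. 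Statement (b) is the delicate one: the column $d$ genuinely interacts with the $\rho$-gadget, a witnessing combination now forcing $\sum_i\theta^{i-1}\nu_i=0$, and one can conclude $(\nu_i)_i=0$, and thereby reduce to the honest spanning condition $X_2\subseteq\cl_M(U)$, only after noting that the $\mathcal K_U$-component of $\mu$ has $(\nu_i)_i=0$, which is exactly the hypothesis that $X_1$ is independent over $U$; so the two gadgets fail to interfere precisely on the sets $U$ that matter. The main obstacle is controlling this interaction while keeping the field degree bounded in terms of $k$ alone, and carrying out the two merges as a nested pair of degree-$k$ extensions (which is where $\bF^{k^2}$ arises) is what resolves it.
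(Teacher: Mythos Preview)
Your construction is correct and is, at heart, the same idea the paper uses: replace $X_2$ by a single ``generic'' element $d$ in its span, and replace $X_1$ by a single element $c$ via the dual operation, each step costing a degree-$k$ field extension.  The paper, however, packages this much more modularly.  It first proves a purely matroid-theoretic lemma: if $M$ is $\iso(X_1,X_1\cup X_2)$-fragile and $d$ is added freely to the flat $\cl_M(X_2)$, then $M\del X_2$ (with $d$ adjoined) is $\iso(X_1,X_1\cup\{d\})$-fragile.  Applying this lemma once and then its dual reduces to $\iso(\{c\},\{c,d\})$-fragility with no reference to a representation; only afterward does the paper invoke the standard fact that a free extension into a $k$-point flat is realizable over $\bF^k$.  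Because fragility is preserved step by step at the matroid level, the paper never has to analyse the interaction between the two gadgets that occupies your part~(b).  Your explicit matrix argument recovers the same conclusion but trades that modularity for a hands-on linear-algebra computation; you also swap the order of the two merges, which is why you must check directly that the row-merge for $c$ does not disturb the column-merge for $d$ on the sets $U$ that matter.
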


The final result shows that an $\bF$-representable
$\iso(1,2)$-fragile matroid has a 
circuit-hyperplane whose relaxation results in an
$\bF^2$-representable matroid.
\begin{theorem}\label{red3}
Let $N=\iso(\{c\},\{c,d\})$ where $c\neq d$, let $M$ be an $N$-fragile matroid representable over a finite field $\bF$,
and let $C$ and $D$ be disjoint subsets of $E(M)$ such that $N= M\con C\del D$.
Then $C\cup\{d\}$ is a circuit-hyperplane of $M$ and the matroid obtained from $M$
by relaxing $C\cup\{d\}$ is $\bF^2$-representable.
\end{theorem}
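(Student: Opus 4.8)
The plan is to prove the two assertions separately. First a short matroidal argument, powered entirely by the uniqueness built into the definition of $N$-fragility, shows that $C\cup\{d\}$ is a circuit-hyperplane of $M$; then an explicit single-entry perturbation of an $\bF$-representation of $M$ yields an $\bF^2$-representation of the relaxation.

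The engine of the first part is the following reformulation of fragility. Put $E=E(M)$. For any $T\subseteq E\setminus\{c,d\}$, the matroid $M\con T\del(E\setminus(T\cup\{c,d\}))$ is just the restriction of $M\con T$ to $\{c,d\}$, and a short computation with the rank function shows this restriction equals $N=\iso(\{c\},\{c,d\})$ exactly when $d\in\cl_M(T)$ and $c\notin\cl_M(T)$. Since $M$ is $N$-fragile with $N=M\con C\del D$, the set $C$ is the \emph{unique} such $T$; in particular $d\in\cl_M(C)$ and $c\notin\cl_M(C)$. Using this together with the exchange property of $\cl_M$, I would then deduce in order: $C$ is independent (otherwise $\cl_M(C\setminus\{e\})=\cl_M(C)$ for some $e\in C$, so $C\setminus\{e\}$ would also qualify); no element of $D$ lies in $\cl_M(C)$, and no element $e$ of $D$ lies outside $\cl_M(C\cup\{c\})$ (in each case $C\cup\{e\}$ would qualify, the second using exchange to see $c\notin\cl_M(C\cup\{e\})$); consequently $\cl_M(C)=C\cup\{d\}$ while $C\cup\{c\}$ is independent and spanning, so $\rk(M)=|C|+1$ and $C\cup\{d\}$ is a hyperplane; and finally $d\notin\cl_M(C\setminus\{c_i\})$ for each $c_i\in C$ (else $C\setminus\{c_i\}$ would qualify), which with the independence of $C$ makes $C\cup\{d\}$ a circuit. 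This settles the first assertion.

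For representability, fix a matrix $A$ over $\bF$ representing $M$ in standard form with respect to the basis $C\cup\{c\}$, so that the columns indexed by $C\cup\{c\}$ form an identity matrix. Since $C\cup\{d\}$ is the fundamental circuit of $d$ with respect to this basis, the column of $A$ indexed by $d$ is supported on the rows indexed by $C$; in particular its entry in the row indexed by $c$ is $0$. Fix $\alpha\in\bF^2\setminus\bF$ and let $A^*$ be the matrix over $\bF^2$ obtained from $A$ by changing that one entry from $0$ to $\alpha$. I claim $A^*$ represents the matroid $M'$ obtained from $M$ by relaxing $C\cup\{d\}$. For a set $S$ of size $\rk(M)$ with $d\notin S$, nothing has changed. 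If $d\in S$, then expanding $\det A^*[S]$ by multilinearity along the column indexed by $d$ gives $\det A^*[S]=\det A[S]\pm\alpha\mu$, where $\mu$ is the determinant of the submatrix of $A$ on the rows indexed by $C$ and the columns indexed by $S\setminus\{d\}$. Both $\det A[S]$ and $\mu$ lie in $\bF$, so since $\alpha\notin\bF$, the set $S$ is independent in $A^*$ iff $\det A[S]\neq0$ or $\mu\neq0$. Here $\mu=0$ when $c\in S$ (the column indexed by $c$ vanishes on the rows indexed by $C$), and when $c\notin S$ we have $\mu\neq0$ iff $(S\setminus\{d\})\cup\{c\}$ is a basis of $M$, because deleting the row and column indexed by $c$ from $A$ yields a representation of $M\con c$. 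Thus $S$ is a basis of the matroid of $A^*$ iff $S$ is a basis of $M$ or $(S\setminus\{d\})\cup\{c\}$ is a basis of $M$.

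It remains to see that this is precisely the basis family of $M'$, namely the bases of $M$ together with $C\cup\{d\}$. Since $C\cup\{c\}$ is a basis of $M$, the set $C\cup\{d\}$ is a basis of the matroid of $A^*$; and if some other $S$ with $d\in S$ and $c\notin S$ had $(S\setminus\{d\})\cup\{c\}$ a basis of $M$ while $S$ itself is not, then $T:=S\setminus\{d\}$ would be independent of size $|C|$ with $c\notin\cl_M(T)$ and $d\in\cl_M(T)$, forcing $T=C$ by the uniqueness observation, so $S=C\cup\{d\}$, a contradiction. Hence $A^*$ represents $M'$ over $\bF^2$. I expect the real work to lie in this last step rather than in any conceptual difficulty: one must ensure the perturbed column does not flip the status of any circuit-hyperplane other than $C\cup\{d\}$, which is exactly where $N$-fragility is invoked a second time. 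I would also check the degenerate cases $C=\emptyset$ and $D=\emptyset$, though neither causes trouble.
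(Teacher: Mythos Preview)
Your argument is correct. Both the closure-based derivation that $C\cup\{d\}$ is a circuit-hyperplane and the determinant computation for the $\bF^2$-representation of the relaxation go through; in particular, the final uniqueness step (forcing $T=C$ whenever $T\cup\{c\}$ is a basis, $T\cup\{d\}$ is not, and $c,d\notin T$) is exactly what is needed to rule out extra new bases.

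The paper proceeds differently. It first packages $N$-fragility into a matrix condition: choosing a basis $B$ of $M$ that displays $N$ and a standard representation $A$, the fact that the $(c,d)$-entry of $A$ is $0$ together with Lemma~\ref{matrix1} shows that $A$ is $\{c,d\}$-fragile, meaning $\rank A[\{c,d\}\cup Y]>\rank A[Y]$ for every nonempty $Y$ disjoint from $\{c,d\}$. The circuit-hyperplane claim is then obtained from Lemma~\ref{converse} (plus duality), and the relaxation claim is proved by comparing $\rank A_1[Z]$ with $\rank A_2[Z]$ for \emph{all} $Z$, using the $\{c,d\}$-fragile inequality in a short chain of rank estimates; the identification $C=B-\{c\}$ falls out of the uniqueness in the definition of fragility. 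Your route bypasses the $X$-fragile matrix machinery entirely, arguing directly with $\cl_M$ for the structural part and with determinants of full-rank square submatrices for the representation part, and invoking the uniqueness of $C$ twice in its raw matroidal form. The trade-off is that the paper's approach integrates this theorem with the same framework used for Theorems~\ref{red1} and~\ref{red2}, whereas yours is self-contained and slightly more elementary for this particular statement.
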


Observe that Theorem~\ref{main} is an immediate consequence of Theorems~\ref{red1},~\ref{red2},
and~\ref{red3}.

We assume that the reader is familiar with elementary matroid theory; we use the
terminology and notation of Oxley~[\ref{oxley}].

\section{Fragile matrices}

In this section we will give a matrix interpretation for minor-fragility in representable matroids.
Towards this end, we develop convenient terminology for viewing  a representable
matroid with respect to a fixed basis.

For a basis $B$ of a matroid $M$ and a set $X\subseteq E(M)$ we denote the
minor $M \con (B-X) \del (E(M)-(B\cup X))$ of $M$ by $M[X,B]$.
The following result is routine and well-known.
\begin{lemma}\label{visible}
If $N$ is a minor of a matroid $M$, then there is a basis $B$ of $M$  such that
$N = M[E(N),B]$.
\end{lemma}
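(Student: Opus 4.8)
The plan is to produce $B$ by choosing a convenient presentation of $N$ as a minor of $M$ and then extending its contraction set to a basis of $M$. Since $N$ is a minor of $M$, we may write $N = M\con C\del D$ for some partition $(C,D)$ of $E(M)-E(N)$. Replacing $C$ by a maximal subset of itself that is independent in $M$ and moving the discarded elements into $D$ does not change the minor: those discarded elements lie in $\cl_M(C)$, hence become loops of $M\con C$, and contracting a loop is the same as deleting it. So we may assume $C$ is independent in $M$, and among all presentations $(C,D)$ of $E(M)-E(N)$ with $N=M\con C\del D$ and $C$ independent in $M$ we fix one with $|C|$ as large as possible.

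The heart of the argument is the claim that $\rk_M(C\cup E(N)) = \rk(M)$. Granting this, $C$ extends to a basis $B$ of $M$ with $B\subseteq C\cup E(N)$, and then $B-E(N)=C$ while $E(M)-(B\cup E(N))=D$, so $M[E(N),B]=M\con C\del D=N$, as required. To prove the claim, suppose for contradiction that some $f\in D$ satisfies $f\notin\cl_M(C\cup E(N))$. Then $f\notin\cl_M(C)$, so $C\cup\{f\}$ is independent in $M$; moreover the ground set of $P:=M\con C\del(D-\{f\})$ is $E(N)\cup\{f\}$, and unwinding the closure operators in the deletion and contraction shows that $f\notin\cl_M(C\cup E(N))$ is exactly the statement $f\notin\cl_P(E(N))$, i.e.\ $f$ is a coloop of $P$. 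Since contracting a coloop is the same as deleting it, $M\con(C\cup\{f\})\del(D-\{f\})=P\con f=P\del f=M\con C\del D=N$, contradicting the maximality of $|C|$.

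The only point in this plan that requires a moment's thought is recognising \emph{which} presentation of $N$ to start from, namely one whose contraction set is independent in $M$ and of maximum size; everything else reduces to the elementary identities $\cl_{M\del X}(Y)=\cl_M(Y)-X$ and $\cl_{M\con X}(Y)=\cl_M(X\cup Y)-X$ together with the fact that deletion and contraction agree on loops and on coloops. I therefore do not expect a serious obstacle: the statement is flagged as routine, and the argument above is essentially the standard one.
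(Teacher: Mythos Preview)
Your proof is correct. The paper does not actually give a proof of this lemma, stating only that it is ``routine and well-known''; your argument---choosing a presentation $N=M\con C\del D$ with $C$ independent and $|C|$ maximal, then extending $C$ to a basis of $M$ inside $C\cup E(N)$---is precisely the standard one.
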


%\begin{proof}
%By~[\ref{oxley}, Lemma~3.3.2], there is an independent set $C$ and a 
%co-independent set $D$ such that $N= M\con C\del D$.
%Let $B_N$ be a basis of $N$ and let $B=C\cup B_N$.
%Then $B$ is a basis of $M$ and $M \con (B-E(N)) \del (E(M)-(B\cup E(N))) = M\con C\del D =N$.
%\end{proof}
If $B$ is a basis of a matroid $M$ and
$N = M[E(N),B]$, then we say that $B$ {\em displays} $N$.

When we refer to a matrix $A\in\bF^{S_1\times S_2}$ we are implicitly defining $\bF$ to be a field
and $S_1$ and $S_2$ to be finite sets. Let $A\in\bF^{S_1\times S_2}$ be a matrix where 
$S_1$ and $S_2$ are disjoint. We let $[I,A]$ denote the matrix obtained from $A$ 
by appending an $S_1\times S_1$ identity matrix; thus $[I,A] \in\bF^{S_1\times(S_1\cup S_2)}$.
For $X\subseteq S_1\cup S_2$, we let $A[X]$ denote the submatrix $A[X\cap S_1,X\cap S_2]$.

If $B$ is a basis of an $\bF$-representable matroid $M$, then there is a matrix $A\in \bF^{B\times E(M)-B}$
such that $M=M([I,A])$; we call $A$ a {\em standard representation} with respect to $B$.
Note that, if $N$ is a minor of $M$ displayed by $B$ and $A$ is a standard representation 
of $M$ with respect to $B$, then $A[E(N)]$ is a standard representation of $N$ with respect to the 
basis $B\cap E(N)$.

For a finite set $X$, a matrix $A\in\bF^{S_1\times S_2}$ is 
called {\em $X$-fragile} if
\begin{itemize}
\item $S_1$ and $S_2$ are disjoint,
\item $X\subseteq S_1\cup S_2$,
\item $A[X]=0$, and 
\item for each nonempty subset $Y$ of $(S_1\cup S_2)-X$, we have
$\rank(A[X\cup Y])>\rank(A[Y])$.
\end{itemize}

Note that, if $A\in \bF^{S_1\times S_2}$ is an $X$-fragile matrix, then $M([I,A[X]]) = \iso(X\cap S_1,X)$.

The following result provides us with a matrix interpretation of minor-fragility for representable matroids.
\begin{lemma}\label{matrix1}
Let $N$ be a matroid, let $M$ be an $\bF$-representable $N$-fragile matroid,
let $B$ be a basis of $M$ that displays $N$, and let $A$ be a standard representation
of $M$ with respect to $B$. If $A'$ is the matrix obtained from $A$ by replacing each entry in the submatrix $A[E(N)]$
with $0$, then $A'$ is $E(N)$-fragile.
\end{lemma}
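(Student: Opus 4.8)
The plan is to verify the four defining properties of an $E(N)$-fragile matrix directly for $A'$, translating the matroid hypothesis "$M$ is $N$-fragile" into the rank inequality in the last bullet. The first three properties are essentially bookkeeping: since $B$ is a basis of $M$, the sets $B$ and $E(M)-B$ are disjoint, so $S_1=B$ and $S_2=E(M)-B$ are disjoint; since $B$ displays $N$, we have $E(N)\subseteq E(M)=B\cup(E(M)-B)=S_1\cup S_2$; and $A'[E(N)]=0$ by the very construction of $A'$. So the entire content of the lemma is the rank condition: for every nonempty $Y\subseteq E(M)-E(N)$, $\rank(A'[E(N)\cup Y])>\rank(A'[Y])$.

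The key translation step is to rewrite this rank condition in matroid terms. First note that $A'$ and $A$ agree outside the rows and columns indexed by $E(N)$; more precisely, for any set $Z$ with $Z\cap E(N)$ contributing only zero rows/columns, submatrices of $[I,A']$ and $[I,A]$ on the relevant index sets are closely related. The clean way to proceed is: let $C=E(N)\cap B$ and $D=E(N)-B$, so that $(C,D)$ is the partition of $E(N)$ witnessing that $B$ displays $N$, and $N=M\con C\del D$. For a nonempty $Y\subseteq E(M)-E(N)$, write $Y_1=Y\cap B$ and $Y_2=Y-B$. I would show that $\rank(A'[Y])$ equals the corank in $M$ of the set $Y_2$ within the contraction $M\con(B-Y_1)$, i.e.\ it measures $r_M$-type quantities of $Y$ relative to $B$; and similarly $\rank(A'[E(N)\cup Y])$ measures the analogous quantity for the set $E(N)\cup Y$ relative to $B$ — but crucially, because $A'$ has been zeroed on $E(N)$, this second rank is computed in the matroid $M'=M([I,A'])$ rather than $M$. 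The matroid $M'$ is exactly $M$ with the minor on $E(N)$ replaced by $\iso(C,E(N))$, i.e.\ $M'[E(N),B]=\iso(C,E(N))$, while $M'$ and $M$ have the same restriction/structure off $E(N)$.

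With that dictionary in place, the rank strict inequality becomes the statement: contracting/restricting to incorporate $E(N)$ strictly increases rank no matter which nonempty chunk $Y$ of $E(M)-E(N)$ we have already used — which is precisely the "unique partition" formulation of $N$-fragility. Concretely, $\rank(A'[E(N)\cup Y])=\rank(A'[Y])$ would say that in $M$ the elements of $E(N)$ can be absorbed (some contracted, some deleted) "for free" relative to $Y$, producing a second way to realize $N$ as a minor differing from $(C,D)$ on $Y$ — contradicting fragility. I'd make this rigorous by using Lemma~\ref{visible}/the standard-representation bookkeeping to pass between $[I,A]$-minors and $M$-minors, and the observation (stated in the excerpt) that $A[E(N)]$ is a standard representation of $N$. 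The main obstacle is the careful handling of the mixed partition: $Y$ straddles $B$ and $E(M)-B$, and $E(N)$ also straddles $B$; keeping straight which elements get contracted versus deleted when computing each rank, and confirming that the "free absorption" really does yield a distinct valid partition $(C',D')$ of $E(M)-E(N)$ with $M\con C'\del D'=N$ (not merely an isomorph), is where the proof needs genuine care rather than routine manipulation.
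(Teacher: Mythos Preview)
Your overall plan---check the three easy conditions, then argue the rank inequality by contradiction via $N$-fragility---is the same as the paper's. But there is a real gap at the crucial step, and also a notational slip that obscures it.

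First the slip: you set $C=E(N)\cap B$ and $D=E(N)-B$ and then write ``$N=M\con C\del D$''. That cannot be right; with your $(C,D)$ partitioning $E(N)$, the minor $M\con C\del D$ lives on $E(M)-E(N)$, not on $E(N)$. The partition witnessing fragility is of $E(M)-E(N)$: with $B$ displaying $N$, one contracts $B-E(N)$ and deletes $E(M)-(B\cup E(N))$. This matters because the whole point is to produce a \emph{second} such partition of $E(M)-E(N)$, and you never say what it is.

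Now the gap. Suppose $\rank(A'[E(N)\cup Y])=\rank(A'[Y])$ for some nonempty $Y$; after restricting to $E(M)=E(N)\cup Y$, write $C=B\cap Y$ and $D=Y-B$ (so the displayed realization is $N=M\con C\del D$). The paper's key move is to show that the \emph{swapped} partition also works: $N=M\con D\del C$. Your proposal gestures at ``free absorption'' but the difficulty is that the rank equality is a statement about $A'$, hence about $M'=M([I,A'])$, whereas the conclusion must be about $M$. The matroids $M$ and $M'$ genuinely differ (not only on $E(N)$---every flat meeting both $E(N)$ and its complement can change), so ``$M'$ is exactly $M$ with the minor on $E(N)$ replaced by $\iso$'' is not a usable description, and Lemma~\ref{visible} alone will not bridge them. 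The paper bridges them by an auxiliary construction: append to $A$ a relabelled copy $A_0$ of the zeroed columns $A'[B,E(N)-B]$, set $M_1=M([I,[A,A_0]])$ with the new columns called $Z$, and verify three facts---$(M_1\con Z)|E(N)=N$; $B\cap E(N)$ is independent in $M_1\con(D\cup Z)$; and (using $\rank(A')=\rank(A[C,D])$) $Z$ becomes loops in $M_1\con D$. Together these give $N=(M\con D)|E(N)=M\con D\del C$, contradicting fragility. That $M_1$-trick is the missing idea; without it (or an equivalent device that lets you read the original $A[E(N)]$ block back after working in $A'$), your outline does not close.
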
 

\begin{proof} Let $X=E(N)$.
Suppose that $A'$ is not $X$-fragile. Then there is a non-empty set $Y\subseteq E(M)-X$ such that
$\rank(A'[X\cup Y])=\rank(A'[Y])$.
By removing the other elements, we may assume that $E(M)=X\cup Y$.
Let $C=B\cap Y$, $D=Y-B$, and let $B_N = B\cap E(N)$.
Observe that $\rank(A')=\rank(A[C,D])$ by the choice of $Y$.
We will obtain a contradiction to
the fact that $M$ is $N$-fragile by showing that $N=M\con D\del C$.

We start by constructing an isomorphic copy $A_0$ of $A'[B,X-B]$ by relabelling
the columns so that the indices form a set $Z$ disjoint from $E(N)$.
Now let $A_1= [A,A_0]$ and $M_1=M([I,A_1])$.

We claim that:
\begin{itemize}
\item[(i)] $N=(M_1\con Z)|X$, and
\item[(ii)] $B_N$ is independent in $M_1\con (D\cup Z)$, and
\item[(iii)] $Z$ is a set of loops in $M_1\con D$.
\end{itemize}

Note that $Z$ is a set of loops in $M_1\con C$ and $N$ is a minor of $M_1\con C$,
so $M_1\con Z$ contains $N$ as a minor.  To show that $N$ is a restriction of 
$M_1\con Z$ it suffices to show that $B_N$ spans $E(N)$ in $M_1\con Z$, or, equivalently,
that $B_N\cup Z$ spans $E(N)$ in $M_1$, which is clear from the construction.
This proves  $(i)$.

Note that $r_{M_1}(B_N\cup D\cup Z) = |B_N|+ \rank(A_1[C,D\cup Z]) =
|B_N|+ \rank(A'[C,D\cup X]) = |B_N|+ \rank(A[C,D])
= |B_N|+ \rank(A[B,D])$, since $\rank(A')=\rank(A[C,D])$.
Therefore $B_N$ is independent in in $M_1\con (D\cup Z)$, proving $(ii)$.

Now $(iii)$ follows directly from the fact that $\rank(A')=\rank(A[C,D])$.

By $(iii)$, we have  $M\con D =(M_1\con D)\del Z = (M_1\con D) \con Z$.
By $(i)$, $N$ is a restriction of $M_1\con Z$. By $(ii)$,
the sets $B_N$ and $D$ are skew in $M_1\con Z$ (that is, $r_{M_1\con Z}(B_N\cup D)
=r_{M_1\con Z}(B_N) + r_{M_1\con Z}(D)$), and hence 
$N$ is a restriction of $M_1\con(D\cup Z)$. However $M\con D = M_1\con(D\cup Z)$,
contradicting the fact that $M$ is $N$-fragile.
\end{proof}

The converse of Lemma~\ref{matrix1} is not true in general, but the following result is a weak converse,
and it implies Theorem~\ref{red1}.
\begin{lemma}\label{converse}
If  $A\in\bF^{S_1\times S_2}$ is an $X$-fragile matrix, where $X\subseteq S_1\cup S_2$,
then $M([I,A])$ is $\iso(X\cap S_1, X)$-fragile.
\end{lemma}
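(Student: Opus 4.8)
The plan is to check, straight from the definitions, the ``unique partition'' characterisation of fragility for $M:=M([I,A])$ and $N:=\iso(X\cap S_1,X)$. Write $E(M)=S_1\cup S_2$ and $E(N)=X$, and set $P:=S_1-X$ and $Q:=S_2-X$, so that $(P,Q)$ is a partition of $E(M)-E(N)$. First I would observe that contracting $P$ and deleting $Q$ turns $[I,A]$ into $[I,A[X]]$, so $M\con P\del Q=M([I,A[X]])$; since $A[X]=0$ this matroid is exactly $\iso(X\cap S_1,X)=N$, as already noted after the definition of $X$-fragile matrices. In particular $N$ is a minor of $M$, and it remains to show that $(P,Q)$ is the \emph{only} partition $(C,D)$ of $E(M)-X$ with $M\con C\del D=N$; by definition this is precisely what $N$-fragility of $M$ amounts to.

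So suppose $(C,D)$ is such a partition, and put $Y:=C\triangle P$; then $Y\subseteq P\cup Q=(S_1\cup S_2)-X$, and $Y\ne\emptyset$ exactly when $(C,D)\ne(P,Q)$. Write $Y_P:=P\cap D\subseteq S_1$ and $Y_Q:=Q\cap C\subseteq S_2$, so that $Y=Y_P\cup Y_Q$, $C\cap S_1=P-Y_P$ and $C\cap S_2=Y_Q$. The engine of the argument is the standard rank formula $r_M(Z)=|Z\cap S_1|+\rank(A[S_1\setminus Z,\,Z\cap S_2])$ for $Z\subseteq S_1\cup S_2$. Using the identities $S_1\setminus C=Y_P\cup(X\cap S_1)$ and $S_1\setminus(C\cup(X\cap S_1))=Y_P$, I would compute
\[
r_M(C\cup(X\cap S_2))=|C\cap S_1|+\rank(A[X\cup Y]),\qquad r_M(C\cup(X\cap S_1))=|C\cap S_1|+|X\cap S_1|+\rank(A[Y]),
\]
the point being that, after checking how $X\cup Y$ and $Y$ split across $S_1$ and $S_2$, the submatrices that appear are literally $A[X\cup Y]$ and $A[Y]$. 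On the other hand, $M\con C\del D=N$ forces the elements of $X\cap S_2$ to be loops of $M\con C$ and $X\cap S_1$ to be independent in $M\con C$ (deletion changes neither loops nor independence among the surviving elements), whence $r_M(C\cup(X\cap S_2))=r_M(C)$ and $r_M(C\cup(X\cap S_1))=r_M(C)+|X\cap S_1|$. Comparing the two displayed expressions gives $\rank(A[X\cup Y])=r_M(C)-|C\cap S_1|=\rank(A[Y])$. If $Y$ were nonempty, this would contradict the last clause in the definition of an $X$-fragile matrix; hence $Y=\emptyset$, i.e. $(C,D)=(P,Q)$.

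I do not expect a genuine conceptual obstacle: the lemma says exactly that the inequalities defining $X$-fragility of $A$ rule out, all at once, every alternative way of displaying $\iso(X\cap S_1,X)$ as a minor of $M([I,A])$. The part that needs care is the set bookkeeping in the two rank computations --- verifying, with the help of $P=S_1-(X\cap S_1)$ and $C\cap S_1=P-Y_P$, that $S_1\setminus C$, $S_1\setminus(C\cup(X\cap S_1))$ and the relevant column-index sets come out as claimed, and that $A[(X\cap S_1)\cup Y_P,\,(X\cap S_2)\cup Y_Q]$ is indeed $A[X\cup Y]$ in the paper's notation. One should also be careful to justify cleanly the two consequences drawn from $M\con C\del D=N$: a surviving element is a loop of $(M\con C)\del D$ iff it is a loop of $M\con C$, and a surviving subset is independent in $(M\con C)\del D$ iff it is independent in $M\con C$.
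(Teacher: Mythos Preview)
Your proof is correct and uses the same engine as the paper's—the rank formula $r_M(Z)=|Z\cap S_1|+\rank(A[S_1\setminus Z,\,Z\cap S_2])$ together with the defining inequality for $X$-fragile matrices. The paper organises the argument a little differently: it first reduces, by contracting $C\cap S_1$ and deleting $D\cap S_2$, to the case $C=S_2-X$ and $D=S_1-X$, and then splits into two cases according to whether the strict inequality $\rank(A[D,C])<\rank(A)$ manifests as $\rank(A[D,C])<\rank(A[S_1,C])$ (forcing $X\cap S_1$ dependent in $M\con C$) or as $\rank(A[S_1,C])<\rank(A)$ (forcing some element of $X\cap S_2$ to be a non-loop in $M\con C$). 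You bypass both the reduction and the case split by using the loop condition and the independence condition simultaneously to pin down $\rank(A[X\cup Y])=\rank(A[Y])$ in one stroke; this is a shade more economical, but the substance is the same.
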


\begin{proof}
Let $M=M([I,A])$.
Note that $M[X,S_1] = \iso(X\cap S_1, X)$.
Let $C$ and $D$ be a partition  
of $E(M)-X$ such that $C\neq S_1-X$.
 We will prove that $M\con C\del D\neq \iso(X\cap S_1, X)$.
By contracting $C\cap S_1$ and deleting $D-S_1$ we may assume
that $D=S_1-X$ and that $C=S_2- X$.

Since $A$ is $X$-fragile, $\rank(A[D,C])< \rank(A)$. Now either
\begin{itemize}
\item[(i)] $\rank(A[D,C])< \rank(A[S_1,C])$, or
\item[(ii)] $\rank(A[S_1,C])< \rank(A)$.
\end{itemize}

In case $(i)$,  we have $r_{M\con C}(S_1\cap X) = 
r_M(C\cup (S_1\cap X)) - r_M(C) = |S_1\cap X| +\rank(A[D,C]) -\rank(A[S_1,C])
<|S_1\cap X|$. So $S_1\cap X$ is dependent in $M\con C$ and hence
$M\con C\del D\neq \iso(X\cap S_1, X)$, as required.

In case $(ii)$, we have $r_{M\con C}(X-S_1)
= r_M((X-S_1)\cup C) - r_M(C) = \rank(A) - \rank(A[S_1,C])>0$, so
$M\con C\del D\neq \iso(X\cap S_1, X)$, as required.
\end{proof}

\section{Reduction to $\iso(1,2)$-fragility}

The results in this section prove Theorem~\ref{red2}.

Let $F$ be a flat of a matroid $M$. We say that a matroid $M'$ is obtained 
by {\em adding an element $e$ freely to $F$ in $M$} if $M'$ is a single-element
extension by a new element $e$ in such a way that $F$ spans $e$ and that
each flat of $M'\del e$ that spans $e$ contains $F$. 
\begin{lemma}\label{iso:easy}
Let $M$ be an $\iso(X_1,X_1\cup X_2)$-fragile matroid, where
$X_1$ and $X_2$ are disjoint finite sets, and let
$M'$ be obtained from $M$ by adding a new element $d$ freely into the flat
spanned by $X_2$. Then  $M'\del X_2$ is $\iso(X_1,X_1\cup \{d\})$-fragile.
\end{lemma}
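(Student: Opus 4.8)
The plan is to verify the two parts of $\iso(X_1,X_1\cup\{d\})$-fragility of $M'' := M'\del X_2$ directly: first that $\iso(X_1,X_1\cup\{d\})$ really is a minor of $M''$, and then that it is obtained uniquely, i.e.\ that no single element of $E(M'')-(X_1\cup\{d\})$ can be removed on either side while keeping $\iso(X_1,X_1\cup\{d\})$ as a minor. Since $X_1$ is a component of $\iso(X_1,X_1\cup X_2)$ with each element a coloop, the starting point is that for the \emph{unique} partition $(C,D)$ of $E(M)-(X_1\cup X_2)$ with $M\con C\del D=\iso(X_1,X_1\cup X_2)$, the set $X_1$ is a basis of $M\con C\del D$ and $X_2$ is a set of loops. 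I would first record what adding $d$ freely into the flat $F:=\cl_M(X_2)$ does: $F$ spans $d$ in $M'$, $d$ is not a loop of $M'$ (as long as $F$ is not the all-loops flat; if $X_2$ spans only loops the statement degenerates and is easy), and crucially every flat of $M$ spanning $d$ contains $F$, hence contains $X_2$.

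Next I would identify the right witnessing partition for $M''$. The natural candidate is $(C,D)$ again, viewed inside $E(M'')-(X_1\cup\{d\}) = E(M)-(X_1\cup X_2)$. So the first claim to check is $M''\con C\del D = \iso(X_1,X_1\cup\{d\})$. In $M\con C\del D$ the set $X_2$ consists of loops lying in the closure flat; contracting $C$ commutes appropriately, and because $d$ was added freely into $\cl_M(X_2)$, in $M'\con C$ the element $d$ lies in the closure of the (loop) set $X_2$, so $d$ becomes a loop of $M'\con C\del D\del X_2$. Deleting the now-irrelevant $X_2$ leaves exactly $X_1$ as coloops and $d$ as a loop: that is $\iso(X_1,X_1\cup\{d\})$. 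This part is a routine closure/freeness computation.

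The main obstacle is the uniqueness half: showing no $e\in E(M'')-(X_1\cup\{d\})$ can be deleted or contracted on the wrong side. Suppose $(C',D')$ is another partition of $E(M'')-(X_1\cup\{d\})$ with $M''\con C'\del D'=\iso(X_1,X_1\cup\{d\})$. I want to lift this to a partition of $E(M)-(X_1\cup X_2)$ that works for $M$, contradicting fragility of $M$. The idea is: $d$ is a loop of $M''\con C'\del D'$, so $d$ lies in $\cl_{M''\con C'}(\emptyset)$, hence in $M'\con C'$ the element $d$ is in the closure of the contracted set; by the defining property of adding $d$ freely to $F=\cl_M(X_2)$, any flat of $M'\del d = M$ whose closure (after contracting $C'$) captures $d$ must contain $F \supseteq X_2$ — this should force $X_2\subseteq \cl_{M}(C')$, i.e.\ $X_2$ is spanned by $C'$ in $M$. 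Then in $M\con C'$ the set $X_2$ is a set of loops, and one checks $M\con C'\del (D'\cup X_2) = \iso(X_1,X_1\cup X_2)$ using that $X_1$ is still a coloop-basis (nothing outside $X_1$ was moved onto $X_1$, since that already held in $M''\con C'\del D'$). Thus $(C',D'\cup X_2)$ is a valid partition for $M$; since $(C',D')\ne(C,D)$ as partitions of $E(M'')-(X_1\cup\{d\})$, we get $(C',D'\cup X_2)\ne(C,D)$ as partitions of $E(M)-(X_1\cup X_2)$, contradicting $N$-fragility of $M$. The delicate step — and where care is needed — is the implication "$d$ loop in $M'\con C'\del D'$" $\Rightarrow$ "$X_2\subseteq\cl_M(C')$", which is exactly where the "each flat spanning $e$ contains $F$" clause of free addition is used; one must phrase it correctly in terms of the flat $\cl_{M'\del d}(C')$ of $M'\del d=M$, noting it spans $d$ in $M'$ and therefore contains $F$.
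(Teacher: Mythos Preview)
Your approach is correct and essentially the same as the paper's: both arguments establish, for every partition $(C',D')$ of $E(M)-(X_1\cup X_2)=E(M'')-(X_1\cup\{d\})$, the equivalence $M\con C'\del D'=\iso(X_1,X_1\cup X_2)$ if and only if $M''\con C'\del D'=\iso(X_1,X_1\cup\{d\})$. The paper streamlines this by first noting that $M'\con C'\del D'$ is itself obtained from $M\con C'\del D'$ by adding $d$ freely into the flat spanned by $X_2$, which makes both directions immediate; your ``delicate step'' is exactly the content of that observation unpacked. One slip to fix: the partition you recover for $M$ should be $(C',D')$, not $(C',D'\cup X_2)$, and the conclusion should read $M\con C'\del D'=\iso(X_1,X_1\cup X_2)$ (the set $X_2$ is part of the target minor, not something to delete).
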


\begin{proof}
Let $(C,D)$ be a partition of $E(M)-(X_1\cup X_2)$.
It suffices to show that $M\con C\del D = \iso(X_1,X_1\cup X_2)$ if 
and only if $(M'\del X_2)\con C\del D = \iso(X_1,X_1\cup\{d\})$.
Note that $M'\con C\del D$ is obtained from $M\con C\del D$ by 
adding $d$ freely to the flat spanned by $X_2$.
If $M\con C\del D = \iso(X_1,X_1\cup X_2)$, then 
$M'\con C\del D = \iso(X_1,X_1\cup X_2\cup\{d\})$
and hence $(M'\del X_2)\con C\del D = \iso(X_1,X_1\cup \{d\})$.
Conversely, if $(M'\del X_2)\con C\del D = \iso(X_1,X_1\cup \{d\})$, then
$M'\con C\del D = \iso(X_1,X_1\cup X_2\cup\{d\})$
and hence $(M'\del \{d\})\con C\del D = \iso(X_1,X_1\cup X_2)$, as required.
\end{proof}

Note that, by Lemma~\ref{iso:easy}, we can reduce an $\iso(X_1,X_1\cup X_2)$-fragile matroid
to an $\iso(X_1,X_1\cup\{d\})$-fragile matroid. Repeating this in the dual we can further reduce
to an $\iso(\{c\},\{c,d\})$-fragile matroid.

We can add an element freely into a flat in a represented matroid by 
going to a sufficiently large extension field; this is both routine and well-known.
\begin{lemma}\label{extensions}
Let $A\in\bF^{S_1\times S_2}$, let $M=M(A)$, let 
$X$ be a $k$-element subset of $S_2$, and let 
$M'$ be the matroid obtained from $M$ by adding a new element $e$ 
freely into the flat spanned by $X$.
Then there is a vector $b\in (\bF^k)^{S_1}$ such that
$[A,b]$ is a representation of $M'$ over $\bF^k$.
\end{lemma}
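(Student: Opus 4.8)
The plan is to take $b$ to be a sufficiently generic $\bF^k$-linear combination of the columns of $A$ indexed by $X$. Fix an ordering $X=\{e_1,\dots,e_k\}$ and write $x_1,\dots,x_k\in\bF^{S_1}$ for the corresponding columns of $A$. Since $\bF^k$ is a degree-$k$ extension of $\bF$, one can pick $\alpha\in\bF^k$ such that $\{1,\alpha,\dots,\alpha^{k-1}\}$ is a basis of $\bF^k$ as an $\bF$-vector space (for instance $\alpha$ a root of an irreducible polynomial of degree $k$ over $\bF$, or any generator of the multiplicative group of $\bF^k$), and then take
\[
b \;=\; \sum_{i=1}^{k}\alpha^{\,i-1}x_i \;\in\;(\bF^k)^{S_1},
\]
so that $[A,b]$ is a matrix over $\bF^k$ with columns indexed by $S_2\cup\{e\}$, where $e$ is the new element.

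First I would record the translation between the statement and what must be checked. The flats of $M=M'\del e$ that span $e$ form a modular cut; the requirement that $F=\cl_M(X)$ span $e$ puts $F$ into this modular cut, and, since modular cuts are upward closed, this forces in every flat containing $F$, while the second condition in the definition of a free extension says nothing else lies in it. Hence it suffices to verify: (a) $M([A,b])\del e=M$ and $e\in\cl_{M([A,b])}(F)$; and (b) for every $C\subseteq E(M)$ such that $b$ lies in the $\bF^k$-span of the columns of $A$ indexed by $C$, we have $X\subseteq\cl_M(C)$. Part (a) is immediate: $M([A,b])$ restricts to $M(A)=M$ on the original columns, and $b$ is an $\bF^k$-combination of $x_1,\dots,x_k$, which are columns lying in $F$, so $e\in\cl(X)\subseteq\cl(F)$. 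Applying (b) to a flat $G$ with $e\in\cl_{M([A,b])}(G)$ then gives $X\subseteq\cl_M(G)=G$, hence $F\subseteq G$, which is exactly the remaining condition.

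The substance is in (b). Suppose $b=A_Cv$ for some $v\in(\bF^k)^C$, where $A_C$ denotes the submatrix of $A$ on the columns indexed by $C$, an $\bF$-matrix. Expanding $v=\sum_{l=0}^{k-1}\alpha^{\,l}v^{(l)}$ with each $v^{(l)}\in\bF^C$ gives $b=\sum_{l=0}^{k-1}\alpha^{\,l}\,(A_Cv^{(l)})$ with each $A_Cv^{(l)}\in\bF^{S_1}$. Comparing this with $b=\sum_{l=0}^{k-1}\alpha^{\,l}x_{l+1}$ coordinate by coordinate and invoking uniqueness of the expansion in the $\bF$-basis $\{1,\alpha,\dots,\alpha^{k-1}\}$ of $\bF^k$ yields $A_Cv^{(l)}=x_{l+1}$ for every $l$. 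Thus every $x_i$ with $e_i\in X$ lies in the $\bF$-column space of $A_C$, that is, $X\subseteq\cl_M(C)$, as required; the degenerate cases ($X$ empty, or all columns $x_i$ zero) are subsumed by the same computation, which then simply forces $b=0$.

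I expect the only delicate point to be the bookkeeping in the first step: one must check that verifying (a) and (b) really does pin down the principal modular cut generated by $F$, and hence the free extension. Once that is settled, the computation in (b) is just ``pull the scalar $\alpha$ through the $\bF$-matrix $A_C$'', and I foresee no real obstacle. In the write-up I would phrase the modular-cut translation as a one-line remark rather than isolate it as a separate lemma.
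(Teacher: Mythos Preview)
Your proof is correct and follows essentially the same approach as the paper's: define $b$ as a linear combination of the $X$-columns with coefficients forming an $\bF$-basis of $\bF^k$, and then use the $\bF$-linear independence of those coefficients to show that any set spanning $e$ must span every element of $X$. The only cosmetic differences are that the paper takes an arbitrary $\bF$-basis $(\alpha_v:v\in X)$ rather than your power basis $1,\alpha,\dots,\alpha^{k-1}$, and verifies the key implication in contrapositive form (an independent set not spanning $X$ does not span $e$) via row operations rather than by your direct coordinate-expansion argument.
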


\begin{proof}
Let $A_v$ denote the column of $A$ that is indexed by $v$.
The elements of the field $\bF^k$ form 
a vectorspace of dimension $k$ over $\bF$; let
$(\alpha_v\, :\, v\in X)$ be a basis 
of this vectorspace. Now let $b=\sum_{v\in X}\alpha_v A_v$
and let $M'=M([A,b])$. By construction, the new element $e$ of $M'$ is spanned by $X$.
It remains to show that each flat of $M'\del e$ that spans $e$ also spans $X$.
Consider an independent set $I\subseteq E(M)$ that does not span $X$ in $M$.
We may apply elementary row-operations over $\bF$ so that each column of $I$ contains 
exacly one non-zero entry. Let $R\subseteq S_1$ denote the set of rows containing non-zero entries
in $A[S_1,I]$. Since $I$ does not span $X$, there exists $i\in S_1-R$ such that
$A[\{i\},X]$ is not identically zero. However the entries of $A[\{i\},X]$ are all in $\bF$ and the values
$(\alpha_v\, :\, v\in X)$ are linearly independent over $\bF$, so $b_i = \sum_{v\in X} \alpha_v A_{i,v} \neq 0$.
Hence $I$ does not span $e$ in $M'$, as required.
\end{proof}

\section{Relaxing a circuit-hyperplane}

The following result implies Theorem~\ref{red3}.
\begin{lemma}\label{matrix3}
Let $\bF$ be a field and $\bF'$ be a field extension. 
Now let $A_1\in \bF^{S_1\times S_2}$
be a $\{c,d\}$-fragile matrix where $c\in S_1$ and $d\in S_2$ and let 
$A_2$ be obtained from $A_1$ by replacing the $(c,d)$-entry with 
an element in $\bF'-\bF$. Then $(S_1-\{c\})\cup \{d\}$ is a circuit-hyperplane
in $M([I,A_1])$ and $M([I,A_2])$ is the matroid obtained from $M([I,A_1])$
by relaxing $(S_1-\{c\})\cup \{d\}$.
\end{lemma}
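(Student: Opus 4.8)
The plan is to analyze the effect of the single entry change on the rank function of the column matroid directly, using the $\{c,d\}$-fragility of $A_1$. Write $H := (S_1 - \{c\}) \cup \{d\}$ and $M_i := M([I,A_i])$ for $i \in \{1,2\}$. First I would establish that $H$ is a circuit-hyperplane of $M_1$. That $H$ has rank $|S_1|-1$ (so is a hyperplane) should follow from $\{c,d\}$-fragility: since $A_1[\{c,d\}] = 0$, the $(c,d)$-entry of $A_1$ is zero, so in $[I,A_1]$ the column indexed by $d$ lies in the span of the rows of $S_1 - \{c\}$, i.e.\ the standard unit vectors $e_i$ for $i \in S_1 - \{c\}$; hence $H$ spans exactly the coordinate subspace $\bF^{S_1 - \{c\}}$ and has rank $|S_1| - 1 = r(M_1) - 1$. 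That $H$ is a circuit, i.e.\ $|H| = |S_1|$ and $H$ is dependent but every proper subset independent, I would get from the fragility condition applied to singletons and to $H$ itself: for $Y = \{d\}$ we need $\rank(A_1[\{c,d\}]) > \rank(A_1[\{d\}])$, forcing $A_1[\{d\}] = 0$, and more to the point, every proper subset of $H$ omits some element of $S_1 - \{c\}$, so it is independent (its columns in $[I,A_1]$ restricted to the omitted coordinate can be cleared), while $H$ itself has $|H| = |S_1|$ columns spanning only an $(|S_1|-1)$-dimensional space. The fragility condition with $Y = S_2 - \{d\}$ (every column of $A_1$ other than the $d$-column) is what rules out $H$ properly containing an independent set of its own size; more carefully, one checks a proper subset $H - \{x\}$ with $x \in S_1 - \{c\}$ is a basis.

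Next I would show $M_2$ is obtained from $M_1$ by relaxing $H$. Recall that relaxing a circuit-hyperplane $H$ of $M_1$ yields the matroid whose bases are those of $M_1$ together with $H$ itself. So it suffices to prove two things: (a) $B$ is a basis of $M_2$ whenever $B \ne H$ is a basis of $M_1$, and symmetrically bases of $M_2$ other than $H$ are bases of $M_1$; and (b) $H$ is a basis of $M_2$. For a size-$|S_1|$ subset $B$ of $S_1 \cup S_2$, independence in $M([I,A_i])$ is detected by the nonvanishing of the corresponding $|S_1| \times |S_1|$ minor of $[I,A_i]$. By cofactor expansion along the column indexed by $d$ (the only column where $A_1$ and $A_2$ differ, and they differ only in the single entry in row $c$), the determinant of such a minor of $[I,A_2]$ equals that of $[I,A_1]$ plus $(t - 0)$ times the cofactor of the $(c,d)$-position, where $t \in \bF' - \bF$ is the new entry. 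If $B$ does not contain both $d$ and does not have row $c$ available for that cofactor — i.e.\ if the $(c,d)$-cofactor vanishes — the two determinants agree, so such $B$ is a basis of one iff the other. For the remaining $B$'s (those containing $d$ with the $(c,d)$-cofactor nonzero), the two determinants are the $\bF$-value of the $[I,A_1]$-minor plus a nonzero $\bF$-multiple of $t$; since $t \notin \bF$, this sum is nonzero, so every such $B$ is a basis of $M_2$. Among these $B$'s is exactly $H$ itself (for which the $[I,A_1]$-minor is $0$ but the $(c,d)$-cofactor is $\pm\det$ of the identity minor on $S_1 - \{c\}$, hence $\pm1 \ne 0$), giving (b); and for the others the $[I,A_1]$-minor was already nonzero, so these were bases of $M_1$ too, matching the relaxation exactly.

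The key lemma to isolate is the cofactor-expansion computation: for any $|S_1|$-subset $B$, $\det([I,A_2][S_1,B]) = \det([I,A_1][S_1,B]) + t \cdot \gamma_B$ where $\gamma_B \in \bF$ is the $(c,d)$-cofactor, which is $\pm\det([I,A_1][S_1-\{c\}, B-\{d\}])$ when $d \in B$ and $0$ otherwise; then the dichotomy ``$\gamma_B = 0$: determinants agree'' versus ``$\gamma_B \ne 0$: sum lies in $\bF[t] \setminus \bF$, hence nonzero'' does all the work. The main obstacle I anticipate is not any single computation but keeping the bookkeeping of circuit-hyperplane vs.\ relaxation bases clean — in particular verifying that the set of $B$ with $\gamma_B \ne 0$ and $\det([I,A_1][S_1,B]) = 0$ is \emph{exactly} $\{H\}$, which is where one genuinely uses all four clauses of $\{c,d\}$-fragility (the condition $\rank(A_1[\{c,d\} \cup Y]) > \rank(A_1[Y])$ for all nonempty $Y$ is precisely what forbids any other dependent set of the ``right'' shape from sneaking in). Once that is pinned down, Theorem~\ref{red3} follows by applying the lemma with $A_1 = A$ the $\{c,d\}$-fragile matrix produced from the $N$-fragile matroid $M$ (via Lemma~\ref{matrix1}, after passing to a displaying basis), with $\bF' = \bF^2$ and $t$ any element of $\bF^2 \setminus \bF$, and translating $(S_1 - \{c\}) \cup \{d\}$ back to $C \cup \{d\}$ in the original labelling.
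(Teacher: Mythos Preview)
Your plan is correct and will go through; the crucial step you flag --- showing that the only $|S_1|$-set $B$ with $\gamma_B\neq 0$ and $\det([I,A_1][S_1,B])=0$ is $H$ --- is exactly a one-line application of fragility: writing $R=S_1-B\cap S_1$ and $C=B\cap S_2$ (so $c\in R$, $d\in C$, $|R|=|C|$), the two conditions say $A_1[R,C]$ is singular while $A_1[R-\{c\},C-\{d\}]$ is nonsingular, and if $R\cup C\neq\{c,d\}$ then fragility with $Y=(R\cup C)-\{c,d\}$ forces $\rank A_1[R,C]>\rank A_1[R-\{c\},C-\{d\}]=|R|-1$, a contradiction. (One slip: your ``$Y=\{d\}$'' line is not a legal use of fragility since $d\in X$; what you actually want for the circuit and hyperplane verifications is fragility with $Y=\{f\}$ for $f\in S_1-\{c\}$ and $Y=\{e\}$ for $e\in S_2-\{d\}$, giving that column $d$ has all non-$c$ entries nonzero and row $c$ has all non-$d$ entries nonzero.)

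The paper's argument is organized a little differently. For the circuit-hyperplane part it invokes Lemma~\ref{converse} (so that $M_1$ is $\iso(\{c\},\{c,d\})$-fragile) plus duality, rather than your direct coordinate check. For the relaxation, instead of comparing bases it proves the stronger statement that $\rank A_1[Z]\neq\rank A_2[Z]$ iff $Z=\{c,d\}$: first a determinant-polynomial argument shows any discrepancy must have $\rank A_1[Z]<\rank A_2[Z]$, and then a short rank-chase using fragility forces $Z=\{c,d\}$. Your cofactor identity $\det_2=\det_1+t\gamma_B$ with $t\notin\bF$ is the same linearity-in-one-entry idea packaged at the level of bases; it is arguably more direct for the relaxation conclusion, while the paper's submatrix-rank formulation yields a slightly stronger intermediate statement.
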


\begin{proof}
Let $M_1=M([I,A_1])$, $M_2=M([I,A_2])$, and $H=(S_1-\{c\})\cup \{d\}$. We claim
that $H$ is a circuit of $M_1$; suppose otherwise.
Note that $S_1$ is a basis, so $S_1\cup \{d\}$ contains a unique circuit $C$.
Since $A_1$ is $\{c,d\}$-fragile, we have 
$A[\{c\},\{d\}]=0$, and hence $c\not\in C$.
Since $H$ is not a circuit, there exists $e\in S_1-\{c\}$
such that $e$ is a coloop of $M_1|(S_1\cup\{d\})$.
Then $(M_1|(S_1\cup\{d\}) )\del e = (M_1|(S_1\cup\{d\}))\con e$.
But then $M_1$ is not $\iso(\{c\},\{c,d\})$-fragile, contrary to
Lemma~\ref{converse}. Thus $H$ is a circuit as claimed. 

Note that $M_1^*=M([A_1^T,I])$ and that $A_1^T$ is
$\{c,d\}$-fragile. Then, by duality, $E(M_1)- H$ is a cocircuit and, hence, 
$H$ is a circuit-hyperplane.

To prove that $M_2$ is obtained from $M_1$
by relaxing $H$ it suffices to show, for each set $Z\subseteq S_1\cup S_2$, that
$\rank A_1[Z]\neq \rank A_2[Z]$ if and only if $Z=\{c,d\}$.
Note that $\rank A_1[\{c,d\}]\neq \rank A_2[\{c,d\}]$. Consider
a set $Z\subseteq S_1\cup S_2$ such that $\rank A_1[Z]\neq \rank A_2[Z]$.

\medskip

\noindent
{\bf Claim:}~~{\it We have $\rank A_1[Z]< \rank A_2[Z]$.}

\begin{proof}[Proof of claim.]
Suppose for a contradiction that $\rank A_1[Z]>\rank A_2[Z]$ and consider
a minimal subset $X\subseteq Z$ such that $\rank A_1[X]>\rank A_2[X]$.
Thus $A_1[X]$ is square and non-singular, $A_2[X]$ is singular, and $c,d\in X$.
Let $B(x)$ denote the matrix obtained from $A_1[X]$ by replacing the $(c,d)$-entry 
with a variable $x$ and let $p(x)= \det(B(x))$. Note that $p(x)= \alpha x+\beta$ where
$\alpha,\beta\in \bF$. Since $A_1[X]$ is non-singular, we have $p(0)\neq 0$. Therefore
$p(x)$ has at most one root and, since $\alpha,\beta\in \bF$, if $p(x)$ has a root, that root is in $\bF$.
However, this contradicts the fact that $A_2[X]$ is singular.
\end{proof}

By construction, $c,d\in Z$ and we may assume that $Z\ne \{c,d\}$.
Then, since $A_1$ is $\{c,d\}$-fragile,
\begin{eqnarray*}
 \rank A_1[Z-\{c,d\}] &\le & \rank A_1[Z]-1 \\
 &\le& \rank A_2[Z] -2\\
 &\le& \rank A_2[Z-\{c,d\}]  \\
 &=& \rank A_1[Z-\{c,d\}].
 \end{eqnarray*}
 Hence $\rank A_1[Z]=\rank A_1[Z-\{c,d\}]+1$ and $\rank A_2[Z] = \rank A_2[Z-\{c,d\}] +2$.
 This second equation implies that 
 $\rank A_2[Z-\{c\}] = \rank A_2[Z-\{c,d\}] +1$. Therefore
 $\rank A_1[Z-\{c\}] = \rank A_1[Z-\{c,d\}] +1$ and hence
 $\rank A_1[Z-\{c\}] = \rank A_1[Z]$.  Thus the row $c$ of $A_1[Z]$
 is a linear combination of the other rows.
 But then the row $c$ of $A_1[Z-\{d\}]$ is a linear combination of the other rows.
 So $\rank A_1[Z-\{d\}] = \rank A_1[Z-\{c,d\}]$ and, hence,
 $\rank A_2[Z-\{d\}] = \rank A_2[Z-\{c,d\}]$. However, this contradicts the fact that 
 $\rank A_2[Z] = \rank A_2[Z-\{c,d\}] +2$.
\end{proof}

	\section*{References}
	
	\newcounter{refs}
	
	\begin{list}{[\arabic{refs}]}%
		{\usecounter{refs}\setlength{\leftmargin}{10mm}\setlength{\itemsep}{0mm}}
\item \label{ggw}
J. Geelen, B. Gerards, G. Whittle, Solving Rota's Conjecture, Notices of the AMS 61 (2014), 736-743.

\item \label{oxley}
J. Oxley, \textit{Matroid Theory, second edition}, Oxford University Press, New York, (2011).
		\end{list}
	
\end{document}